\documentclass[13pt, reqno]{amsart}
\usepackage{amsmath}
\usepackage{amscd}
\usepackage{amsthm}
\usepackage{amssymb} \usepackage{latexsym}
\usepackage{eufrak}
\usepackage{euscript}
\usepackage{epsfig}
\usepackage{graphics}
\usepackage{array}
\usepackage{enumerate}
\usepackage{color}
\usepackage{wasysym}
\usepackage{pdfsync}
\usepackage{stmaryrd}

\addtolength{\textheight}{4mm}
\addtolength{\textwidth}{12mm}
\topmargin=4mm
\oddsidemargin=12mm

\newcommand{\bel}[1]{\begin{equation}\label{#1}}

\newcommand{\be}{\begin{equation}}

\newcommand{\ba}{\begin{eqnarray}}
\newcommand{\ea}{\end{eqnarray}}

\newcommand{\qe}{\end{equation}}
\newcommand{\R}{{\mathbb R}}
\newcommand{\N}{{\mathbb N}}
\newcommand{\Z}{{\mathbb Z}}

\newcommand{\eg}{\begin{example}}
\newcommand{\egd}{\end{example}}
\newcommand{\tm}{\begin{thm}}
\newcommand{\tmd}{\end{thm}}
\newcommand{\co}{\begin{coro}}
\newcommand{\cod}{\end{coro}}
\newcommand{\enu}{\begin{enumerate}}
\newcommand{\enud}{\end{enumerate}}
\newcommand{\rmk}{\begin{rem}}
\newcommand{\rmkd}{\end{rem}}

\theoremstyle{theorem}
\newtheorem{thm}{Theorem}[section]
\newtheorem{prop}{Proposition}[section]
\theoremstyle{example}
\newtheorem{example}{Example}[section]
\theoremstyle{corollary}
\newtheorem{coro}{Corollary}[section]
\theoremstyle{lemma}
\newtheorem{lemma}{Lemma}[section]
\theoremstyle{definition}

\theoremstyle{proof}

\theoremstyle{remark}
\newtheorem{rem}{Remark}[section]
\theoremstyle{remark}

\begin{document}

\title[Ancient caloric functions on graphs]{Dimensional bounds for ancient caloric functions on graphs}

\author{Bobo Hua}
\address{School of Mathematical Sciences, LMNS,
Fudan University, Shanghai 200433, China; Shanghai Center for
Mathematical Sciences, Fudan University, Shanghai 200433,
China.}
\email{bobohua@fudan.edu.cn}

%\author{Lili Wang}
%\address{School of Mathematical Sciences, Fudan University, Shanghai 200433, China.}
%\email{lilyecnu@outlook.com}
%\thanks{FB was partially supported by the Alexander von Humboldt foundation and partially supported by the NSF Grant DMS-0804454 Differential Equations in Geometry.}
\begin{abstract}
We study ancient solutions of polynomial growth to heat equations on graphs, and extend Colding and Minicozzi's theorem \cite{ColdingM19} on manifolds to graphs: For a graph of polynomial volume growth, the dimension of the space of ancient solutions of polynomial growth is bounded by the product of the growth degree and the dimension of harmonic functions with the same growth.\end{abstract}

\maketitle
%\section{Response to the report}

%\begin{proof}[Proof of Example~1.4]
%We may refer to my joint paper with Mugnolo, ``Time regularity and long-time behavior of parabolic p-Laplace equations on infinite graphs", see Theorem~3.6.
%\end{proof}

%\begin{proof}[Proof of Example~1.5]
%By the assumption, we have $(S_p)$ for the measure $n,$ that is,
%\begin{equation}\label{eq:ef}\frac{S}{2}\sum_{x,y\in X}|f(x)-f(y)|^2b(x,y)\geq \left(\sum_{x\in X}|f(x)|^p n(x)\right)^{\frac2p}.\end{equation} Note that $\inf_{x\in X} n(x)\geq c_0>0$ and $\sup_{x\in X}m(x)\leq c_1<\infty,$ where the second assertion follows from $\sum_{x\in X}m(x)<\infty.$ Hence $$n(x)\geq \frac{c_0}{c_1}m(x),\quad \forall x\in X.$$ Hence \eqref{eq:ef} yields 
%\begin{equation*}\frac{S}{2}\sum_{x,y\in X}|f(x)-f(y)|^2b(x,y)\geq (\frac{c_0}{c_1})^{\frac2p}\left(\sum_{x\in X}|f(x)|^pm(x)\right)^{\frac2p}.\end{equation*}

%\end{proof}

\section{Introducton}

For a (complete) Riemannian manifold $M$ and $k>0,$ we denote by $\mathcal{H}_k(M)$ the space of harmonic functions $u$ satisfying that there are some $p\in M$ and a constant $C_f$ depending on $f$ such that
$$\sup_{x\in B_R(p)}|f(x)|\leq C_f(1+R)^k,\quad \forall\  R\geq 1,$$ where $B_R(p)$ is the ball of radius $R$ centered at $p.$ This is the space of harmonic functions of polynomial growth on $M$ with the growth degree at most $k.$

Let $N$ be a Riemannian manifold with nonnegative Ricci curvature. In 1975, Yau \cite{YauCPAM75} proved the Liouville theorem that any positive harmonic function on $N$ is constant. Yau conjectured that for any $k>0$ the space $\mathcal{H}_k(N)$ is finite-dimensional, see e.g. \cite{Yau87EM,Yau93book}. This conjecture was settled in \cite{ColdingMAnnals97}, see also \cite{ColdingMJDG97,ColdingMInv98,ColdingMCPAM98, LiMRL98, CCM95, LiTam89} for related results. 

A natural generalization is to consider ancient solutions, defined on the time interval $(-\infty,0],$ of polynomial growth to heat equations. For a Riemannian manifold $M$ and $k>0,$ we denote by $\mathcal{P}_k(M)$ the space of ancient solutions $u(x,t)$ satisfying that
there are some $p\in M$ and a constant $C_u$ such that
$$\sup_{B_R(p)\times [-R^2, 0]}|u|\leq C_u(1+R)^k,\quad \forall\  R\geq 1.$$
Calle \cite{CalleMZ06,Callethesis} initiated the study of dimensional bounds for $\mathcal{P}_k(M).$ For an $n$-dimensional Riemannian manifold $N$ with nonnegative Ricci curvature, Lin and Zhang \cite{LinZhang17} proved that
$$\dim\mathcal{P}_k(N)\leq C k^{n+1}, \quad k\geq 1.$$ Recently, Colding and Minicozzi \cite{ColdingM19} proved the following general result, which yields the improvement of Lin and Zhang's result, $$\dim\mathcal{P}_k(N)\leq C k^{n}, \quad k\geq 1.$$
\tm[\cite{ColdingM19}]\label{thm:CM1} If there are $p\in M,$ constants $C, d_V$ such that $$\mathrm{Vol}(B_R(p))\leq C(1+R)^{d_V},\quad \forall R\geq 1,$$ then
$$\dim \mathcal{P}_{2k}(M)\leq (k+1)\dim \mathcal{H}_{2k}(M),\quad \forall k\geq 1.$$
\tmd
%\footnote{In fact, they proved the dimension estimate for the manifolds satisfying the volume doubling property and  the mean value inequality for heat equations.}

Harmonic functions of polynomial growth on graphs have been extensively studied by many authors, e.g. \cite{DelmottePolynomial98,KleinerJAMS10,ShalomTao10,Taoweb2,HJLAGAG13,HornLinLiuYau14,BDKY15,HuaJostLiu15,HuaJostMathZ15,HuaJostTAMS15,MPTY17}. In this paper, we study ancient solutions of polynomial growth to heat equations on graphs. We will generalize Colding and Minicozzi's theorem, Theorem~\ref{thm:CM1}, to discrete spaces, as proposed in \cite{ColdingM19}. \footnote{``We expect that the proof of Theorem 0.3 (see Theorem~\ref{thm:CM1} in this paper) extends to many discrete spaces, allowing a wide range of applications", quoted from \cite{ColdingM19}.}
%Moreover, these properties make sense also for discrete spaces, vastly extending the theory and methods out of the continuous world. Recently Kleiner, [K], (see also Shalom-Tao, [ST], [T1], [T2]) used, in part, this in his new proof of an important and foundational result in geometric group theory, originally due to Gromov, [G]. We expect that the proof of Theorem 0.3 extends to
%many discrete spaces, allowing a wide range of applications.

We recall the setting of weighted graphs. Let $(V,E)$ be a locally finite, simple, undirected graph. Two vertices $x,y$ are called neighbours, denoted by $x\sim y$, if there is an edge connecting $x$ and $y,$ i.e. $\{x,y\}\in E.$ 
We denote by $d$ the usual combinatorial graph distance, that is, $d(x, y) :=
\inf\{n | x = z_0 \sim. . . \sim z_n = y\},$ and by $$B_R(x):=\{y\in V: d(y,x)\leq R\}$$ the ball of radius $R>0$ centered at $x.$
Let $$w: E\to (0,\infty),\quad \{x,y\}\mapsto w_{xy}=w_{yx},$$ be the edge weight function. For any vertex $x,$ the weighted vertex degree is defined as
$$\mu_x:=\sum_{y\in V:y\sim x}w_{xy}.$$ Hence $(V,\mu)$ can be regarded as a discrete measure space. For any $\Omega\subset V,$ we denote by $\mu(\Omega):=\sum_{x\in \Omega}\mu_x$ the $\mu$-measure of $\Omega.$ We call the triple $G=(V,E,w)$ a weighted graph.

For a weighted graph $G$ and any function $f:V\to \R,$ the Laplace operator $\Delta$ is defined as
$$\Delta f (x):= \sum_{y\in V:y\sim x}\frac{w_{xy}}{\mu_x}\left(f(y)-f(x)\right), \quad\forall x\in V.$$
A function $f$ on $V$ is called harmonic if $\Delta f=0.$ Let $\R_-:=(-\infty,0].$ A function $u(x,t)$ on $V\times \R_-$ is called an ancient solution to the heat equation if $$\frac{\partial }{\partial t} u(x,t)=\Delta u(x,t),\quad \forall x\in V, t\in \R_-.$$

 We denote by $\mathcal{H}_k(G)$ the space of harmonic functions of polynomial growth on $G$ with the growth degree at most $k,$ i.e. $f\in \mathcal{H}_k(G)$ if $\Delta f=0$ and there are some $x_0\in V$ and a constant $C_f$ such that
 $$\sup_{x\in B_R(x_0)}|f(x)|\leq C_f(1+R)^k.$$ 
We denote by $\mathcal{P}_k(G)$  the space of ancient solutions of polynomial growth to heat equation with the growth degree at most $k,$ i.e. $u\in \mathcal{P}_k(G)$ if $\partial_t u=\Delta u$ on $V\times \R_-$ and for some $x_0\in V$ and a constant $C_u,$ such that
 $$\sup_{(x,t)\in B_R(x_0)\times[-R^2, 0]}|u(x,t)|\leq C_u(1+R)^k.$$ 

The following is the main result of the paper.
\tm\label{thm:main1} For a weighted graph $G,$ if for some $x_0\in V,$ $\alpha>0$ and $C>0,$ such that $$\mu(B_R(x_0))\leq C(1+R)^{\alpha},\quad \forall R\geq 1,$$ then for all $k\geq 1,$
$$\dim \mathcal{P}_{2k}(G)\leq (k+1)\dim \mathcal{H}_{2k}(G).$$
\tmd
\rmk \enu
\item This extends Theorem~\ref{thm:CM1} to the discrete setting.
\item By this theorem, one can derive dimensional bounds of ancient solutions of polynomial growth via those of harmonic functions of polynomial growth. By applying this result, we get dimensional bounds of ancient solutions of polynomial growth on many graphs, such as graphs satisfying the volume doubling property and the Poincar\'e inequality, e.g.  Cayley graphs of nilpotent groups, graphs satisfying the curvature dimension condition $CDE'(0,n)$ for some finite $n$ \cite{HornLinLiuYau14}, and planar graphs with nonnegative combinatorial curvature \cite{HuaJostLiu15}.
\item By the calculation of caloric polynomials on the integer lattices $\Z^n,$  the estimate in Theorem~\ref{thm:main1} is sharp in the order of $k,$ $k\to \infty,$ see the Appendix.
\enud
\rmkd

For the proof of the theorem, we closely follow the arguments in Lin and Zhang \cite{LinZhang17}, and Colding and Minicozzi \cite{ColdingM19}: We first prove the Caccioppoli type inequality to the heat equation on graphs, and use it to conclude that higher order time derivatives of an ancient solution $u$ of polynomial growth vanish. This yields a decomposition of $u,$ i.e. $u$ is a polynomial in time. Then a tricky dimensional counting argument implies the result. Our contribution is the proof of a discrete analog of the Caccioppoli type inequality to the heat equation, see Theorem~\ref{thm:paracacc}. For the desired estimate, some additional terms appear in the discrete setting, such as $$\sum_{x,y\in V: y\sim x}w_{xy}|u(y,0)-u(x,0)|^2,$$ see \eqref{eq:et1}. Its continuous counterpart is $\int |\nabla u|^2$ for $t=0.$ Using the discrete feature, which is related to the boundedness of the Laplacian on graphs, we estimate it by the quantity $\sum_{x\in V}\mu_x|u(x,0)|^2.$ This quantity was unnecessary, and hence dropped in the continuous setting, see \eqref{eq:est1} and \cite[(1.4) and (1.5)]{ColdingM19}.

The paper is organized as follows: In the next section, we prove the Caccioppoli type inequality for the heat equation on graphs. In Section~\ref{sec:proof}, we give the proof of Theorem~\ref{thm:main1}. In the Appendix, we calculate the dimension of caloric polynomials on $\Z^n.$ 

In this paper, for simplicity the constant $C$ may change from line to line.

\section{Caccioppoli type inequality for heat equations}
Let $G=(V,E,w)$ be a weighted graph. For convenience, we extend the edge weight function $w$ to $V\times V$ by
setting $w_{xy}=0$ for any $x\not\sim y.$ In this way, for a function $f$ on $V$ we may write
$$\sum_{y\in V}w_{xy}f(y)=\sum_{y\in V: y\sim x} w_{xy}f(y).$$ For any $\Omega\subset V,$ we write 
$$\sum_\Omega f:=\sum_{x\in \Omega}f(x)\mu_x,\ \sum f:=\sum_{x\in V}f(x)\mu_x,$$ whenever they make sense.
The difference operator $\nabla$ is defined as
$$\nabla_{xy}f=f(y)-f(x),\quad \forall x,y\in V.$$ The following proposition is elementary.
\begin{prop} \begin{equation}\label{eq:basic}\nabla_{xy}(fg)=f(x)\nabla_{xy}g+g(y)\nabla_{xy}f.
\end{equation}
\end{prop}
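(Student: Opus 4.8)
The plan is to prove the identity \eqref{eq:basic} by direct expansion, using the standard add-and-subtract trick. First I would write out the left-hand side explicitly from the definition of the difference operator, namely $\nabla_{xy}(fg) = (fg)(y) - (fg)(x) = f(y)g(y) - f(x)g(x)$. The goal is then to reorganize this single difference of products into a sum of two terms, each of which isolates one of the elementary differences $\nabla_{xy}f = f(y)-f(x)$ or $\nabla_{xy}g = g(y)-g(x)$.

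The key step is to insert a compensating cross-term. I would add and subtract $f(x)g(y)$, writing
$$f(y)g(y) - f(x)g(x) = \bigl(f(y)g(y) - f(x)g(y)\bigr) + \bigl(f(x)g(y) - f(x)g(x)\bigr).$$
Factoring $g(y)$ out of the first bracket and $f(x)$ out of the second yields $g(y)\,\nabla_{xy}f + f(x)\,\nabla_{xy}g$, which is exactly the claimed right-hand side of \eqref{eq:basic}.

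The only real choice here — and hence the only place one could deviate — is which cross-term to insert: adding and subtracting $f(y)g(x)$ instead would produce the equally valid variant $g(x)\,\nabla_{xy}f + f(y)\,\nabla_{xy}g$, so the asymmetric appearance of $f(x)$ and $g(y)$ in \eqref{eq:basic} is merely an artifact of this choice. I do not expect any genuine obstacle: the identity is a purely algebraic consequence of the definition of $\nabla$, valid for arbitrary real-valued functions $f,g$ and arbitrary $x,y\in V$, and it makes no use of the graph structure, the edge weights $w_{xy}$, or any harmonicity or heat-equation hypothesis.
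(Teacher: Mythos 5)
Your proof is correct: the add-and-subtract of the cross-term $f(x)g(y)$ immediately yields $\nabla_{xy}(fg)=f(x)\nabla_{xy}g+g(y)\nabla_{xy}f$, and this direct algebraic verification is exactly the elementary argument the paper has in mind (it states the proposition without proof, calling it elementary). Your remark that the alternative cross-term $f(y)g(x)$ gives the symmetric variant $g(x)\nabla_{xy}f+f(y)\nabla_{xy}g$ is also accurate and harmless.
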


The ``carr\'e du champ" operator $\Gamma$ is defined as 
$$\Gamma(f)(x)=\frac12\sum_{y\in V}\frac{w_{xy}}{\mu_x}(f(y)-f(x))^2,\quad x\in V.$$ So that $\Gamma(f)$ is a function on $V.$ This is a discrete analog of the norm squared of the gradient of a $C^1$ function $f$ on a manifold, i.e. $|\nabla f|^2.$

%We denote by $C_0(V)$ the set of functions of finite support. 

The following Green's formula is well-known, see e.g. \cite[Theorem~2.1]{Grigoryanbook}.
\tm For any $f,g:V\to \R,$ if $g$ is of finite support, then
\begin{equation}\label{eq:Green}\frac12\sum_{x,y}w_{xy}\nabla_{xy}f\nabla_{xy}g=-\sum_{x\in V}\Delta f(x) g(x)\mu_x.\end{equation}
\tmd

From now on, we fix $x_0\in V$ as a base vertex. We write $B_R:=B_R(x_0),$ $R>0,$ for simplicity. We denote by $Q_R:=B_R\times [-R^2,0]$ the parabolic cylinder of size $R$ at $(x_0, 0).$
For a space-time function $u(x,t)$ on $V\times \R_-,$ 
we denote 
$$\int_{Q_R} u:=\int_{-R^2}^{0}\sum_{x\in B_R}u(x,t)\mu_xdt.$$ For any $t_0\in \R_-,$
we write
$$\left.\sum_\Omega u\right|_{t=t_0}:=\sum_{x\in \Omega}u(x,t_0)\mu_x.$$

The following is the Caccioppoli type inequality to the heat equation on graphs, see e.g. \cite[(3.12)]{LinZhang17} and \cite[(1.2)]{ColdingM19} for the continuous setting. 
\tm\label{thm:paracacc} There is a universal constant $C$ such that for any ancient solution $u_t=\Delta u$ and $R\geq 1,$
\begin{equation}\label{eq:cacci1} R^2\int_{Q_R}\Gamma(u)+R^4\int_{Q_R}u_t^2\leq C\int_{Q_{36R}} u^2.
\end{equation}
\tmd

\begin{proof} We follow the proof strategy in the continuous setting, see e.g. \cite{ColdingM19}. Some modifications for the discrete setting are needed. For any $R>0,$ we denote by 
$$\eta(x):=0\vee \left(2-\frac{d(x,x_0)}{R}\right)\wedge 1$$ the cut-off function on $B_{2R}.$ It is easy to see that $\eta$ is supported in $B_{2R},$ and takes constant-value $1$ on $B_R.$ Moreover, $|\nabla_{xy}\eta|\leq \frac{2}{R},$ for any $x\sim y.$ 

We first estimate $\int_{Q_R}\Gamma(u).$ Multiplying $4\eta^2u$ on both sides of the heat equation $u_t=\Delta u,$ and summing over the space $V,$ we have
\begin{eqnarray*}
&&\frac{d}{dt}\left(2\sum\eta^2 u^2\right)\\&=&\sum 4\eta^2 u\partial_tu=2\sum\eta^2u\Delta u
=-\sum_{x,y}w_{xy}\nabla_{xy}u\nabla_{xy}(\eta^2 u)\\
&=&-\sum_{x,y}w_{xy}\nabla_{xy}u(\eta^2(x)\nabla_{xy}u+u(y,t)\nabla_{xy}(\eta^2))\\
&=&-\sum_{x,y}w_{xy}|\nabla_{xy}u|^2\eta^2(x)-\sum_{x,y}w_{xy}u(y,t)\nabla_{xy}u\nabla_{xy}\eta(2\eta(x)+\nabla_{xy}\eta)\\
&=&-\sum_{x,y}w_{xy}|\nabla_{xy}u|^2\eta^2(x)-2\sum_{x,y}w_{xy}\eta(x)u(y,t)\nabla_{xy}u\nabla_{xy}\eta-\sum_{x,y}w_{xy}u(y,t)\nabla_{xy}u|\nabla_{xy}\eta|^2.
\end{eqnarray*} where we used the Green's formula \eqref{eq:Green} in the second line.
For the last term on the right hand side of the above inequality, by swapping $x$ and $y,$ the symmetry yields that
\begin{eqnarray*}-\sum_{x,y}w_{xy}u(y,t)\nabla_{xy}u|\nabla_{xy}\eta|^2&=&-\frac{1}{2}\sum_{x,y}w_{xy}(u(y,t)-u(x,t))\nabla_{xy}u|\nabla_{xy}\eta|^2\\
&=&-\frac12\sum_{x,y}w_{xy}|\nabla_{xy}u|^2|\nabla_{xy}\eta|^2\leq 0.
\end{eqnarray*} 
Dropping this term, we get
\begin{eqnarray*}
\frac{d}{dt}\left(2\sum\eta^2 u^2\right)&\leq&-\sum_{x,y}w_{xy}|\nabla_{xy}u|^2\eta^2(x)-2\sum_{x,y}w_{xy}\eta(x)u(y,t)\nabla_{xy}u\nabla_{xy}\eta\\
&\leq&-\sum_{x,y}w_{xy}|\nabla_{xy}u|^2\eta^2(x)+\frac12\sum_{x,y}w_{xy}|\nabla_{xy}u|^2\eta^2(x)+2\sum_{x,y}w_{xy}u^2(y,t)|\nabla_{xy}\eta|^2\\
&=&-\frac12\sum_{x,y}w_{xy}|\nabla_{xy}u|^2\eta^2(x)+2\sum_{x,y}w_{xy}u^2(y,t)|\nabla_{xy}\eta|^2.
\end{eqnarray*} where we used $2ab\leq \frac12 a^2+2b^2.$
For $R\geq 1,$ integrating this in time from $-R^2$ to $0,$ and using the properties of $\eta,$ we get
\begin{eqnarray}
2\left.\sum_{B_R} u^2\right|_{t=0}+\int_{Q_R}\Gamma(u)&\leq&2\left.\sum\eta^2 u^2\right|_{t=0}+\frac12\int_{-R^2}^0\sum_{x,y}w_{xy}|\nabla_{xy} u|^2\eta^2(x)\nonumber\\
&\leq &2\int_{-R^2}^0\sum_{x,y}w_{xy}u^2(y,t)|\nabla_{xy}\eta|^2dt+2\left.\sum \eta^2u^2\right|_{t=-R^2}\nonumber\\
&\leq&\frac{8}{R^2}\int_{-R^2}^0\sum_{y\in B_{2R+1}}\mu_yu^2(y,t)dt+2\left.\sum_{B_{2R}} u^2\right|_{t=-R^2}\nonumber\\
&\leq &\frac{8}{R^2}\int_{Q_{3R}}u^2+2\left.\sum_{B_{2R}} u^2\right|_{t=-R^2}.\label{eq:est1}
\end{eqnarray}
By the mean value property, there is $R_1\in [R,2R]$ such that
$$\left.\sum_{B_{4R}}u^2\right|_{t=-R_1^2}=\frac{1}{3R^2}\int_{-4R^2}^{-R^2}\sum_{x\in B_{4R}}\mu_xu^2(x,t)dt.$$ By using \eqref{eq:est1} for $R=R_1$ and the above equation, we get
\begin{eqnarray}
2\left.\sum_{B_R} u^2\right|_{t=0}+\int_{Q_R}\Gamma(u)&\leq& 2\left.\sum_{B_{R_1}} u^2\right|_{t=0}+\int_{Q_{R_1}}\Gamma(u)
\leq \frac{8}{R_1^2}\int_{Q_{3R_1}}u^2+2\left.\sum_{B_{2R_1}} u^2\right|_{t=-R_1^2}\nonumber\\
&\leq &\frac{C}{R^2}\int_{Q_{6R}}u^2+\frac{2}{3R^2}\int_{-4R^2}^{-R^2}\sum_{x\in B_{4R}}\mu_xu^2(x,t)dt\nonumber\\
&\leq&\frac{C}{R^2}\int_{Q_{6R}}u^2.\label{eq:est2}
\end{eqnarray} Note that the term $\left.\sum_{B_R} u^2\right|_{t=0}$ on the left hand side of the inequality is not needed in the continuous setting, see \cite[(1.5)]{ColdingM19}. We keep this term for the following estimates.

Next we estimate $\int_{Q_R}u_t^2.$ By differentiating in time, and by Green's formula \eqref{eq:Green}, we get
\begin{eqnarray*}
\frac{d}{dt}\left(\sum\Gamma(u)\eta^2\right)&=&\sum_{x,y}w_{xy}\nabla_{xy}u\nabla_{xy}u_t\eta^2(x)\\
&=&\sum_{x,y}w_{xy}\nabla_{xy}u\left(\nabla_{xy}(u_t\eta^2)-u_t(y,t)\nabla_{xy}(\eta^2)\right)\\
&=&-2\sum(\Delta u)u_t\eta^2-\sum_{x,y}w_{xy}u_t(y,t)\nabla_{xy}u\nabla_{xy}\eta(2\eta(y)-\nabla_{xy}\eta)\\
&=&-2\sum u_t^2\eta^2-2\sum_{x,y}w_{xy}u_t(y,t)\eta(y)\nabla_{xy}u\nabla_{xy}\eta+\sum_{x,y}w_{xy}u_t(y,t)\nabla_{xy}u|\nabla_{xy}\eta|^2\\
&=:&-2\sum u_t^2\eta^2+I+II.
\end{eqnarray*}
For the term $I$ on the right hand side of the above inequality, by $2ab\leq a^2+b^2,$
\begin{eqnarray*}
I&\leq& \sum_{x,y}w_{xy}u_t^2(y,t)\eta^2(y)+\sum_{x,y}w_{xy}|\nabla_{xy}u|^2|\nabla_{xy}\eta|^2\\
&=&\sum u_t^2\eta^2+\sum_{x,y}w_{xy}|\nabla_{xy}u|^2|\nabla_{xy}\eta|^2.
\end{eqnarray*}
For the term $II,$ by swapping $x$ and $y,$ the symmetry yields that
\begin{eqnarray*}
II&=&\frac12\sum_{x,y}w_{xy}\nabla_{xy}u\nabla_{xy} u_t|\nabla_{xy}\eta|^2\\
&=&\frac14\frac{d}{dt}\left(\sum_{x,y}w_{xy}|\nabla_{xy}u|^2|\nabla_{xy}\eta|^2\right)
\end{eqnarray*}
Combining the above estimates, we get
$$\frac{d}{dt}\left(\sum\Gamma(u)\eta^2\right)\leq -\sum u_t^2\eta^2+\sum_{x,y}w_{xy}|\nabla_{xy}u|^2|\nabla_{xy}\eta|^2+\frac14\frac{d}{dt}\left(\sum_{x,y}w_{xy}|\nabla_{xy}u|^2|\nabla_{xy}\eta|^2\right).$$
For $R\geq 1,$ integrating this in time from $-R^2$ to $0,$ and using the properties of $\eta,$ we have
\begin{eqnarray}
\int_{Q_R}u_t^2&\leq&\int_{-R^2}^0\sum_{x,y}w_{xy}|\nabla_{xy}u|^2|\nabla_{xy}\eta|^2+\left.\frac14\sum_{x,y}w_{xy}|\nabla_{xy}u|^2|\nabla_{xy}\eta|^2\right|_{t=0}+\left.\sum\Gamma(u)\eta^2\right|_{t=-R^2}\nonumber\\
&=:&III+IV+\left.\sum\Gamma(u)\eta^2\right|_{t=-R^2}.\label{eq:et1}
\end{eqnarray} For the term $III$ in the above inequality,
\begin{eqnarray*}
III&=&\int_{-R^2}^0\sum_{x,y}w_{xy}|\nabla_{xy}u|^2|\nabla_{xy}\eta|^2\leq \frac{4}{R^2}\int_{-R^2}^0\sum_{x,y\in B_{2R+1}}w_{xy}|\nabla_{xy}u|^2
\\
&\leq&\frac{C}{R^2}\int_{Q_{3R}}\Gamma(u)\leq \frac{C}{R^4}\int_{Q_{18R}}u^2,
\end{eqnarray*} where we used \eqref{eq:est2} in the last inequality.

For the term $IV,$ it is an additional term appearing in the discrete setting. We estimate it as follows,
\begin{eqnarray*}IV&=&\left.\frac14\sum_{x,y}w_{xy}|\nabla_{xy}u|^2|\nabla_{xy}\eta|^2\right|_{t=0}\leq\left. \frac{1}{R^2}\sum_{x,y\in B_{2R+1}}w_{xy}|\nabla_{xy}u|^2\right|_{t=0}\\
&\leq&\left. \frac{2}{R^2}\sum_{x,y\in B_{3R}}w_{xy}(u(x,t)^2+u(y,t)^2)\right|_{t=0}\leq\left. \frac{4}{R^2}\sum_{ B_{3R}}u^2\right|_{t=0}\\
&\leq& \frac{C}{R^4}\int_{Q_{18R}}u^2,
\end{eqnarray*} where the term $\left.\sum_{B_{3R}} u^2\right|_{t=0}$ is bounded by using \eqref{eq:est2}. Hence by \eqref{eq:et1},
\begin{equation}\label{eq:et2}\int_{Q_R}u_t^2\leq \frac{C}{R^4}\int_{Q_{18R}}u^2+\left.\sum_{B_{2R}}\Gamma(u)\right|_{t=-R^2}.
\end{equation}
By the mean value property, there is $R_2\in [R,2R]$ such that
\begin{equation}\label{eq:et3}\left.\sum_{B_{4R}}\Gamma(u)\right|_{t=-R_2^2}=\frac{1}{3R^2}\int_{-4R^2}^{-R^2}\sum_{x\in B_{4R}}\mu_x\Gamma(u)(x,t)dt.\end{equation}
Applying \eqref{eq:et2} for $R=R_2,$ we get
\begin{eqnarray}
\int_{Q_R}u_t^2&\leq&\int_{Q_{R_2}}u_t^2\leq \frac{C}{R_2^4}\int_{Q_{18R_2}}u^2+\left.\sum_{B_{2R_2}}\Gamma(u)\right|_{t=-R_2^2}\nonumber\\
&\leq &\frac{C}{R^4}\int_{Q_{36R}}u^2+\left.\sum_{B_{4R}}\Gamma(u)\right|_{t=-R_2^2}\nonumber\\
&= &\frac{C}{R^4}\int_{Q_{36R}}u^2+\frac{1}{3R^2}\int_{-4R^2}^{-R^2}\sum_{x\in B_{4R}}\mu_x\Gamma(u)(x,t)dt\nonumber\\
&\leq &\frac{C}{R^4}\int_{Q_{36R}}u^2+\frac{C}{R^2}\int_{Q_{4R}}\Gamma(u)\nonumber\\
&\leq &\frac{C}{R^4}\int_{Q_{36R}}u^2,\label{eq:et4}
\end{eqnarray} where we have used \eqref{eq:et2} and \eqref{eq:est2}.

The theorem follows from \eqref{eq:est2} and \eqref{eq:et4}.
\end{proof}

This yields the following corollary.
\co\label{coro:c1} Let $u\in \mathcal{P}_{k}(G).$ If there are constants $\alpha,C>0$ such that 
$$\mu(B_R)\leq C(1+R)^\alpha,\quad \forall R\geq 1,$$ then for any $m\in \N,$ $4m>2k+\alpha+2,$
$$\partial_t^m u\equiv 0.$$
\cod
\begin{proof}
We follow the argument in \cite{ColdingM19}\footnote{This result was proved for the manifolds satisfying stronger conditions, the volume doubling property and the mean value inequality for heat equations, see \cite[Page 17]{LinZhang17}. }. For the sake of completeness, we include the proof here. 

Since $\partial_t-\Delta$ commutes with $\partial_t,$ $$(\partial_t-\Delta)\partial_t^i u=0,\quad\forall i\in\N.$$ For any $R\geq 1,$ applying Theorem~\ref{thm:paracacc} for $\partial_t^i u,$ $0\leq i\leq m-1,$
we get
\begin{eqnarray*}
\int_{Q_R}|\partial_t^mu|^2&\leq& \frac{C}{R^4}\int_{Q_{36R}} |\partial_t^{m-1}u|^2\\
&\leq &\cdots\leq \frac{C(m)}{R^{4m}}\int_{Q_{(36)^mR}} u^2\\
&\leq& \frac{C}{R^{4m}}\mu(B_{(36)^mR}) (36)^{2m}R^2 \sup_{Q_{(36)^mR}}u^2\\
&\leq& C R^{-4m+2k+\alpha+2}. 
\end{eqnarray*} So that for $4m>2k+\alpha+2,$ by passing to the limit $R\to +\infty,$ we get
$$\partial_t^m u\equiv 0.$$ This proves the corollary.

\end{proof}

\section{Proof of Theorem~\ref{thm:main1}}\label{sec:proof}
In this section, we prove the main theorem, Theorem~\ref{thm:main1}. The proof follows verbatim from \cite{ColdingM19}. We include it here for the sake of completeness.
Choose $m\in \N$ such that $4m>4k+\alpha+2.$ For any $u\in \mathcal{P}_{2k}(G),$
by Corollary~\ref{coro:c1}, $\partial_t^m u\equiv 0.$ Hence we have
$$u(x,t)=p_0(x)+tp_1(x)+\cdots+t^{m-1}p_{m-1}(x).$$ 
This yields that $u$ is a polynomial in time, which is crucial for the following arguments. This was obtained by \cite[Theorem~1.2]{LinZhang17} for manifolds satisfying the volume doubling property and  the mean value inequality for heat equations.  It was observed by \cite{ColdingM19} that the polynomial volume growth condition is in fact sufficient.

Furthermore, by the growth condition of $u\in \mathcal{P}_{2k}(G),$ considering large $t$ and fixed $x,$ we have $$p_i\equiv 0, \quad \forall i>k.$$ This yields that
\begin{equation}\label{eq:eqpq1}u(x,t)=p_0(x)+tp_1(x)+\cdots+t^{l}p_{l}(x),
\end{equation} where $l:=\lfloor k\rfloor,$ the greatest integer less than or equal to $k.$ 

We claim that the function $p_i(x),$ $0\leq i\leq l,$ grows polynomially with the growth degree less than or equal to $2k.$ Fix distinct values $-1<t_1<t_2<\cdots<t_l<t_{l+1}=0,$ by the computation of the Vandermonde determinant, one can show that
$$\beta_j:=(1,t_j,t_j^2,\cdots, t_j^l), \quad 1\leq j\leq l+1,$$ are linear independent in $\R^{l+1}.$ Let $e_i$ be the standard unit vector in $\R^{n+1}.$ Then there are $b_{j}^i\in \R$ such that
$$e_i=\sum_{j=1}^{l+1} b_j^i\beta_j.$$ Using this fact and \eqref{eq:eqpq1}, we get
$$p_i(x)=\sum_{j=1}^{l+1}b_j^i u(x,t_j).$$ Since $u(x,t_j),$ $1\leq j\leq l+1,$ grows polynomially with the growth degree less than or equal to $2k,$ so does $p_i.$ This proves the claim.

Since $u_t=\Delta u,$ by \eqref{eq:eqpq1}, 
\begin{equation*} \Delta p_l=0,\ \Delta p_i=(i+1)p_{i+1}, \quad 0\leq i\leq l-1.
\end{equation*} Hence we get a linear map
\begin{eqnarray*}\Psi_0:&&\mathcal{P}_{2k}(G)\to \mathcal{H}_{2k}(G)\\
&&\ \  \quad\quad u\mapsto p_l.
\end{eqnarray*} Let $\mathcal{K}_0:=\mathrm{Ker}(\Psi_0).$ It follows that
$$\dim \mathcal{P}_{2k}(G)\leq \dim \mathcal{K}_0+\dim \mathcal{H}_{2k}(G).$$
To estimate $\dim \mathcal{K}_0,$ we note that
for any $u\in \mathcal{K}_0,$ $$p_l=0, \ \Delta p_{l-1}=0.$$ Hence we have a linear map
\begin{eqnarray*}\Psi_1:&&\mathcal{K}_0\to \mathcal{H}_{2k}(G)\\
&&\ \  u\mapsto p_{l-1}.
\end{eqnarray*} Let $\mathcal{K}_1:=\mathrm{Ker}(\Psi_1).$  This yields that
$$\dim \mathcal{K}_0\leq \dim \mathcal{K}_1+\dim \mathcal{H}_{2k}(G).$$ Repeating this $l+1$ times, we prove that
\begin{eqnarray*}\dim \mathcal{P}_{2k}(G)&\leq &(l+1) \mathcal{H}_{2k}(G)\\
&\leq &(k+1) \mathcal{H}_{2k}(G).
\end{eqnarray*} This proves the theorem.

%\end{proof}

\textbf{Acknowledgements.} We thank Qi S. Zhang for many discussions and comments on dimension estimates of ancient solutions of polynomial growth. The author is supported by NSFC, no.11831004 and no. 11826031.

\section{Appendix: Caloric polynomials on $\Z^n$}
%Let $u(x,t)$ be a solution to the heat equation on $\R^n,$
%$$(\partial_t -\Delta) u(x,t)=0,\quad \forall x\in \R^n, t\in\R_-:=(-\infty,0].$$
%The ancient solution $u$ to the heat equation is called of polynomial growth if there are constant $C,d$ such that
%\begin{equation}\label{eq:eq1}|u(x,t)|\leq C(|x|+\sqrt{t}+1)^d,\forall x\in \R^n, t\in \R_-.\end{equation}

Let $(\Z^n, S)$ be the Cayley graph of the free abelian group $\Z^n$ with a finite symmetric generating set $S,$ i.e. $S=S^{-1},$ where each edge has unit weight. We write $\mathcal{P}_k^n=\mathcal{P}_k(\Z^n, S).$ 
%For any $d>0,$ we denote by $F^d$ the space of smooth functions on $\R^n\times\R_-$ satisfying $\eqref{eq:eq1}$ for some $C>0.$
%We denote by $\mathcal{P}_k^n$ the space of ancient solutions in $F^d.$ This is the space of polynomial growth ancient solutions with growth rate at most $d.$
%For any $d>0,$ we denote by 
%$$:=\{u(x,t): (\partial_t -\Delta) u=0\ \mathrm{on}\ \R^n\times\R_-, \eqref{eq:eq1}\ \mathrm{holds\ for\ some\ }C\}.$$

Let $\Z_+:=\N\cup\{0\}.$ For any $k\in \Z_+,$ we denote by $P^k$ the space of polynomials of degree less than or equal to $k$ in $\R^n.$
For any polynomial $u$ on $\R^n\times \R,$ we can write it as 
$$u(x,t)=\sum_{i=0}^mp_i(x)t^i,$$ where $p_i$ are polynomials in $x\in \R^n.$ The parabolic degree of $u$ is defined as $$\deg(u):=\max_{0\leq i\leq m}\{\deg(p_i)+2i\}.$$ For any $k\in \Z_+,$ we denote by 
$\hat{P}^k$ the space of polynomials in $(x,t)\in \R^{n}\times \R$ of parabolic degree at most $k.$ By the induction argument and the Liouville theorem, following \cite{HJLAGAG13}, one can prove that for any $k>0,$ $$\mathcal{P}_k^n\subset \hat{P}^{\lfloor k\rfloor}.$$
This means that for any $u\in \mathcal{P}_k^n,$ which is defined on $\Z^n\times \R_-,$ one can extend it to be a polynomial in $\R^n\times \R.$ By this result, to estimate the dimension of $\mathcal{P}_k^n$ it suffices to consider $k\in \Z_+$ and caloric polynomials, i.e. polynomials in $(x,t)$ satisfying the discrete heat eqaution.

%The following theorem is well-known.
%\begin{thm}For any $d>0,$ $$\mathcal{P}_k^n\subset \hat{P}^d.$$
%\end{thm}

In the following, we calculate the dimension of caloric polynomials on $(\Z^n, S).$ 
\begin{thm}\label{thm:app1}For any $k\in \Z_+,$ $$\dim \mathcal{P}_k^n=\dim P^k=\sum_{i=0}^k\binom{i+n-1}{n-1}.$$% For any $d>0,$ 
%$$\dim H^d_n=\sum_{i=0}^d\binom{i+n-1}{n-1}.$$
\end{thm}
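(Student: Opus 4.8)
The plan is to identify $\mathcal{P}_k^n$, via the inclusion $\mathcal{P}_k^n \subset \hat P^{\lfloor k\rfloor}$ recalled above, with the space of \emph{caloric polynomials of parabolic degree at most $k$}, and then to show that such a polynomial is completely determined by its time-zero slice $p_0(x) = u(x,0)$. The single discrete ingredient that makes everything work is the observation that, because $S = S^{-1}$ is symmetric, the operator $\Delta f(x) = \frac{1}{|S|}\sum_{s\in S}(f(x+s)-f(x))$ lowers the polynomial degree by at least two, i.e. $\deg(\Delta p) \le \deg(p) - 2$ for every polynomial $p$ on $\R^n$ (with the zero polynomial having degree $-\infty$). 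Indeed, for fixed $s$ the shift $p(\cdot + s)$ has the same top-degree part as $p$, so $p(\cdot+s)-p$ has degree $\le \deg p - 1$ with its top homogeneous part equal to $\nabla P_d(x)\cdot s$, linear in $s$; summing over the symmetric set $S$ annihilates this odd term, leaving degree $\le \deg p - 2$. This is the exact discrete analog of $\Delta$ being a second-order operator.

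Granting this, I would expand a caloric polynomial as $u(x,t) = \sum_{i=0}^m p_i(x) t^i$ and match powers of $t$ in $\partial_t u = \Delta u$ to obtain the recursion $(i+1)p_{i+1} = \Delta p_i$, hence $p_i = \frac{1}{i!}\Delta^i p_0$ for all $i$. Thus every coefficient, and therefore $u$ itself, is determined by $p_0$; moreover the sum is automatically finite, since the degree-lowering forces $\Delta^i p_0 = 0$ once $2i > \deg p_0$. Conversely, for any polynomial $p_0$ on $\R^n$ the finite sum $u(x,t) = \sum_{i\ge 0}\frac{t^i}{i!}\Delta^i p_0(x)$ solves the discrete heat equation by a one-line computation.

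This sets up the evaluation map $u \mapsto u(\cdot, 0)$ from caloric polynomials to polynomials in $x$, and I would check it restricts to a linear isomorphism onto $P^k$. Injectivity is immediate from $p_i = \frac{1}{i!}\Delta^i p_0$. For the degree bookkeeping, the bound $\deg(\Delta^i p_0) \le \deg(p_0) - 2i$ gives $\deg(p_i) + 2i \le \deg(p_0)$, so the parabolic degree of $u$ equals $\deg(p_0)$; hence $u$ has parabolic degree $\le k$ if and only if $p_0 \in P^k$, and the map sends caloric polynomials of parabolic degree $\le k$ bijectively onto $P^k$. To close the identification with $\mathcal{P}_k^n$ itself I would note that any such $u$ satisfies $|u(x,t)| \le CR^k$ on $B_R \times [-R^2,0]$, since each term obeys $|p_i(x)t^i| \le CR^{\deg p_i + 2i} \le CR^k$ there; together with $\mathcal{P}_k^n \subset \hat P^{\lfloor k\rfloor}$ this yields $\dim \mathcal{P}_k^n = \dim P^k$.

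The remaining equality $\dim P^k = \sum_{i=0}^k \binom{i+n-1}{n-1}$ is the elementary count of monomials: there are $\binom{i+n-1}{n-1}$ monomials of degree exactly $i$ in $n$ variables, and summing over $0 \le i \le k$ counts a basis of $P^k$. The only genuinely nontrivial point, and the step I expect to require the most care, is the degree-lowering property of the symmetric Laplacian, since it is precisely here that the hypothesis $S = S^{-1}$ enters and that the discrete picture mirrors the continuous one; everything else is the linear-algebra consequence of having a degree-two-lowering, nilpotent-on-polynomials operator $\Delta$.
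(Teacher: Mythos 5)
Your proof is correct, but it takes a genuinely different route from the paper's. Both arguments start from the same reduction: via the inclusion $\mathcal{P}_k^n\subset \hat P^{\lfloor k\rfloor}$ and the easy growth bound for polynomials on $B_R\times[-R^2,0]$, one identifies $\mathcal{P}_k^n$ (for integer $k$) with the space of caloric polynomials of parabolic degree at most $k$. After that they diverge. The paper proves that $\Delta-\partial_t\colon \hat P^k\to \hat P^{k-2}$ is \emph{surjective} (Lemma~\ref{lem:app2}), which rests on the cited surjectivity of $\Delta\colon P^k\to P^{k-2}$ from \cite{HJLAGAG13}; rank--nullity then gives $\dim\mathcal{P}_k^n=\dim\hat P^k-\dim\hat P^{k-2}$, and a short combinatorial computation ($\hat S^i=\sum_{j}S^{i-2j}$, hence $\hat S^k+\hat S^{k-1}=\sum_{j=0}^k S^j$) collapses this difference to $\dim P^k$. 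You instead construct an explicit isomorphism: evaluation at $t=0$ maps caloric polynomials of parabolic degree $\le k$ bijectively onto $P^k$, with inverse $p_0\mapsto \sum_{i\ge 0}\frac{t^i}{i!}\Delta^i p_0$ (a finite sum, formally $e^{t\Delta}p_0$), everything hinging on the degree-lowering property $\deg(\Delta p)\le \deg p-2$, which you prove correctly from the symmetry $S=-S$ (the degree-$(d-1)$ part of $p(\cdot+s)-p$ is odd in $s$, so it cancels in the symmetric sum). Your route is more self-contained: it needs neither the nontrivial surjectivity of $\Delta$ on polynomials nor the $\hat S^i$ bookkeeping, and it produces a canonical linear isomorphism $\mathcal{P}_k^n\cong P^k$ rather than a mere equality of dimensions. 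The paper's route, in exchange, leans on an already-available result of \cite{HJLAGAG13}, and its surjectivity lemma has independent interest. One point you should make explicit: the recursion $(i+1)p_{i+1}=\Delta p_i$ presupposes that the polynomial extension of $u\in\mathcal{P}_k^n$ satisfies the heat equation as a polynomial identity on all of $\R^n\times\R$; this is immediate because $\partial_t u-\Delta u$ is a polynomial vanishing on $\Z^n\times\R_-$, but it deserves a sentence (the paper glosses the same identification at the same level).
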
 

This yields the following corollary. 
\begin{coro}There are positive constants $C_1,C_2$ such that $$C_1 k^n\leq \dim \mathcal{P}_k^n\leq C_2 k^n,\quad \forall k\geq 1.$$
\end{coro}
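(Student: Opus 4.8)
The plan is to reduce the asymptotic bound to an elementary estimate of the exact dimension formula furnished by Theorem~\ref{thm:app1}. By that theorem, for every $k\in\Z_+$ we have
$$\dim\mathcal{P}_k^n=\sum_{i=0}^k\binom{i+n-1}{n-1}.$$
The crux is to collapse this sum into a single binomial coefficient. Reindexing by $j=i+n-1$ and applying the hockey-stick identity $\sum_{j=n-1}^{k+n-1}\binom{j}{n-1}=\binom{k+n}{n}$, one obtains the closed form
$$\dim\mathcal{P}_k^n=\binom{k+n}{n}=\frac{1}{n!}\prod_{j=1}^n(k+j).$$
This exhibits $\dim\mathcal{P}_k^n$ as a polynomial in $k$ of degree exactly $n$ with positive leading coefficient $1/n!$, which is precisely the shape the corollary asserts.

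The two-sided bound then follows by estimating each factor $k+j$, for $1\le j\le n$, from above and below. For $k\ge 1$ one has $k\le k+j\le k+n\le (n+1)k$, the last inequality using $n\le nk$. Taking the product over $j=1,\dots,n$ and dividing by $n!$ gives
$$\frac{1}{n!}\,k^n\le \binom{k+n}{n}\le\frac{(n+1)^n}{n!}\,k^n,\qquad k\ge 1,$$
so the corollary holds with the explicit constants $C_1=1/n!$ and $C_2=(n+1)^n/n!$.

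There is essentially no serious obstacle here; the only point requiring attention is that $C_1,C_2$ are permitted to depend on the fixed dimension $n$, which is consistent with the statement. Should one wish to bypass the hockey-stick identity, the same conclusion is reachable directly from the sum: the upper bound follows from monotonicity of $i\mapsto\binom{i+n-1}{n-1}$, giving $\dim\mathcal{P}_k^n\le(k+1)\binom{k+n-1}{n-1}\le Ck^n$, while the lower bound follows by retaining only the top half of the terms, $\lceil k/2\rceil\le i\le k$, each of which is at least $c\,k^{n-1}$, so that their sum is at least $c'k^n$.
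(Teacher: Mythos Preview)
Your proof is correct and follows essentially the approach the paper intends: the corollary is stated immediately after Theorem~\ref{thm:app1} and is meant to be an elementary consequence of the exact formula $\dim\mathcal{P}_k^n=\sum_{i=0}^k\binom{i+n-1}{n-1}$, which you carry out in full detail via the hockey-stick identity and the two-sided bound on $\binom{k+n}{n}$. The paper itself gives no explicit proof of the corollary, so your write-up is in fact more complete.
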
 Noting that by Theorem~\ref{thm:main1} and \cite[Theorem~4]{HJLAGAG13}, we have
$$\dim \mathcal{P}_{2k}^n\leq (k+1) \dim \mathcal{H}_{2k}(\Z^n, S)\leq Ck^n,\quad k\geq 1.$$ Hence, the corollary indicates that the estimate in Theorem~\ref{thm:main1} is sharp for $(\Z^n, S)$ in the order of $k.$
%As a corollary, there are  $C_1,C_2$ such that
%$$C_1d^n\leq \dim H^d_n\leq C_2d^n,\quad \forall d\geq 1.$$

%The following lemma is well-known.
%\begin{lemma}\label{lem:lap}For any $k\in \Z,$ $k\geq 2,$ the map $$\Delta: {P}^k\to  {P}^{k-2}$$ is surjective.
%\end{lemma}
We prove the following lemma.
\begin{lemma}\label{lem:app2} For any $k\in \N,$ $k\geq 2,$ the map $$\Delta-\partial_t: \hat{P}^k\to  \hat{P}^{k-2}$$ is surjective.
\end{lemma}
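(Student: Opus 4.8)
The plan is to prove surjectivity of $\Delta - \partial_t : \hat{P}^k \to \hat{P}^{k-2}$ by a dimension-counting argument combined with an injectivity analysis of the kernel, reducing everything to the already-established structure of caloric polynomials. Since both spaces are finite-dimensional, it suffices to show that the rank of the map equals $\dim \hat{P}^{k-2}$, equivalently that $\dim \ker(\Delta-\partial_t) = \dim \hat{P}^k - \dim \hat{P}^{k-2}$. The kernel of $\Delta-\partial_t$ acting on $\hat{P}^k$ is precisely the space of caloric polynomials of parabolic degree at most $k$, which by the discussion preceding Theorem~\ref{thm:app1} is exactly $\mathcal{P}_k^n$ (viewed as polynomials). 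So the identity I need is $\dim \mathcal{P}_k^n = \dim \hat{P}^k - \dim \hat{P}^{k-2}$.

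First I would compute $\dim \hat{P}^k$ directly. Writing a polynomial as $\sum_i p_i(x) t^i$ with $\deg(p_i) + 2i \leq k$, the dimension is $\sum_{2i \leq k} \dim P^{k-2i}$, where $\dim P^j = \sum_{s=0}^{j}\binom{s+n-1}{n-1}$ is the number of monomials in $n$ variables of degree at most $j$. Taking the telescoping difference, $\dim \hat{P}^k - \dim \hat{P}^{k-2} = \sum_{2i\leq k}\dim P^{k-2i} - \sum_{2i \leq k-2}\dim P^{k-2-2i}$, and after reindexing the second sum this collapses to a single surviving term $\dim P^{k}$ (the top $i=0$ contribution), since every lower term $\dim P^{k-2i}$ for $i \geq 1$ appears identically in both sums. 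This gives $\dim\hat{P}^k - \dim\hat{P}^{k-2} = \dim P^k$, which matches the asserted value $\dim \mathcal{P}_k^n = \dim P^k$ from Theorem~\ref{thm:app1}.

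The remaining and genuinely substantive step is to establish injectivity-type control, namely that the rank-nullity bookkeeping is consistent, which amounts to proving $\dim \mathcal{P}_k^n = \dim P^k$ for the caloric-polynomial kernel. The natural mechanism is the map sending a caloric polynomial to its top spatial data: given $u = \sum_i p_i(x)t^i$ with $\Delta u = \partial_t u$, the caloric condition forces the recursion $\Delta p_i = (i+1)p_{i+1}$, so $u$ is completely determined by $p_0$ together with the harmonic constraint on the top term. Thus the assignment $u \mapsto p_0$ is injective into $P^k$: if $p_0 = 0$ then all $p_i$ vanish by the recursion. Surjectivity of this correspondence (every degree-$\leq k$ polynomial $p_0$ arises as the $t=0$ slice of a caloric polynomial via the finite expansion $u = \sum_i \frac{t^i}{i!}\Delta^i p_0$, which terminates and has parabolic degree $\leq k$) then yields $\dim \mathcal{P}_k^n = \dim P^k$ exactly.

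The main obstacle I anticipate is verifying that the constructed caloric extension $u = \sum_{i\geq 0}\frac{t^i}{i!}\Delta^i p_0$ actually lands in $\hat{P}^k$, i.e. that it has parabolic degree at most $k$ rather than larger. In the continuum this is immediate because $\Delta$ drops degree by $2$, so $\Delta^i p_0$ has spatial degree $\leq \deg(p_0) - 2i$ and hence parabolic degree $\deg(\Delta^i p_0) + 2i \leq \deg(p_0) \leq k$. On the graph $(\Z^n, S)$, however, $\Delta$ is a difference operator, and I must confirm that it does not raise the polynomial degree and in fact lowers it by at least two on the leading homogeneous part; this is where the specific structure of the lattice Laplacian (its symmetry under $S = S^{-1}$, which kills odd-order terms in the Taylor expansion) enters, and it is the one calculation I would carry out carefully rather than assume.
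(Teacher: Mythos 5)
Your proposal is correct, but it takes a genuinely different route from the paper. The paper proves surjectivity \emph{constructively}: for each monomial $g=x_1^{a_1}\cdots x_n^{a_n}t^b\in\hat{P}^{k-2}$ it builds a preimage $u=\sum_{i=0}^{b+1}p_i(x)t^i$ by solving the triangular system $\Delta p_{b+1}=0$, $\Delta p_b=(b+1)p_{b+1}+x_1^{a_1}\cdots x_n^{a_n}$, $\Delta p_{i-1}=ip_i$ downward in $i$, where each spatial Poisson equation is solved by citing the surjectivity of $\Delta\colon P^j\to P^{j-2}$ from [HJLAGAG13, Corollary~2]. You instead argue by rank--nullity: you identify the kernel of $\Delta-\partial_t$ on $\hat{P}^k$ with $P^k$ via the $t=0$ slice, using the heat-polynomial parametrization $p_0\mapsto u=\sum_i \frac{t^i}{i!}\Delta^i p_0$, and combine this with the telescoping identity $\dim\hat{P}^k-\dim\hat{P}^{k-2}=\dim P^k$. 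The trade-off is interesting: the paper's argument requires the nontrivial external input that the spatial Laplacian is \emph{surjective} on polynomials, while yours needs only the weaker, elementary fact that $\Delta$ \emph{lowers polynomial degree by at least} $2$; in exchange, the paper's proof produces explicit preimages, whereas yours is purely dimensional. Moreover, your kernel computation proves $\dim\mathcal{P}_k^n=\dim P^k$ outright, so Theorem~\ref{thm:app1} becomes an immediate byproduct rather than a consequence derived from the lemma afterward (the paper's proof of Theorem~\ref{thm:app1} is exactly the rank--nullity-plus-telescoping step that you have absorbed into the lemma).

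The one step you flagged as needing verification does go through, and your proposed mechanism is the right one: for $p$ of degree $d$, exact Taylor expansion gives
\begin{equation*}
\Delta p(x)=\frac{1}{|S|}\sum_{s\in S}\bigl(p(x+s)-p(x)\bigr)=\sum_{|\alpha|\geq 1}\frac{c_\alpha}{\alpha!\,|S|}D^\alpha p(x),\qquad c_\alpha=\sum_{s\in S}s^\alpha,
\end{equation*}
and $c_\alpha=0$ for $|\alpha|$ odd since $S=-S$; in particular the first-order terms vanish, so only $|\alpha|\geq 2$ contributes and $\deg(\Delta p)\leq d-2$. This is all you need: it makes $u=\sum_i\frac{t^i}{i!}\Delta^i p_0$ a finite sum lying in $\hat{P}^k$, and the recursion $\Delta p_i=(i+1)p_{i+1}$ forced by the heat equation gives injectivity of the slice map. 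With that calculation written out, your argument is complete.
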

\begin{proof} It suffices to show that for any monomial $g$ in $ \hat{P}^{k-2},$ say $$g=x_1^{a_1}x_2^{a_2}\cdots x_n^{a_n}t^b,\quad a_1,a_2,\cdots a_n,b\in \Z_+, \sum_{i=1}^n a_i+2b\leq k-2,$$ there is  $u\in  \hat{P}^k$ such that 
\begin{equation}\label{eq:eq3}(\Delta-\partial_t)u=g.\end{equation} We set $u(x,t)=\sum_{i=0}^{b+1}p_i(x)t^i,$ where $p_i$ are polynomials of degree at most $k-2i,$ to be determined later. Then the above equation is equivalent to the following:
\begin{eqnarray}\label{eq:eq2}&&\Delta p_{b+1}=0,\nonumber\\
&&\Delta p_{b}=(b+1)p_{b+1}+x_1^{a_1}x_2^{a_2}\cdots x_n^{a_n},\\
&&\Delta p_{b-1}=bp_{b},\nonumber\\
&&\cdots\nonumber\\
&&\Delta p_0= p_1.\nonumber
\end{eqnarray}We set $p_{b+1}=1.$ The following fact is useful, see \cite[Corollary~2]{HJLAGAG13}: For any $k\in \N,$ $k\geq 2,$ the map $$\Delta: {P}^k\to  {P}^{k-2}$$ is surjective. Hence there exists $p_b\in P^{k-2b}$ such that \eqref{eq:eq2} holds.
Using the above fact, we solve the above equations recursively, and obtain $p_i\in P^{k-2i}$ for all $0\leq i\leq b.$ This yields the desired polynomial $u$ solving \eqref{eq:eq3}, and proves the lemma.
\end{proof}

\begin{proof}[Proof of Theorem~\ref{thm:app1}]
By Lemma~\ref{lem:app2},
\begin{equation}\label{eq:1}\dim \mathcal{P}_k^n=\dim  \hat{P}^k-\dim  \hat{P}^{k-2}.\end{equation}

%\begin{thm} 
%\end{thm}
For any $k\in \Z_+,$ we denote by $S^k$ the dimension of the space of polynomials of degree $k$ in $\R^n,$ and by
$\hat{S}^k$ the dimension of the space of polynomials in $(x,t)\in \R^n\times \R$ of parabolic degree $k.$ Then 
$$\dim P^k=\sum_{i=0}^k S^i,\quad \dim \hat{P}^k=\sum_{i=0}^k \hat{S}^i.$$ By \eqref{eq:1},
$$\dim \mathcal{P}_k^n=\hat{S}^k+\hat{S}^{k-1}.$$ For any polynomial $u$ in $\R^n\times \R$ of parabolic degree $i,$ one can write it as
$$u(x,t)=\sum_{j=0}^{\lfloor\frac{i}{2}\rfloor}p_j(x)t^j,$$ where $p_j\in P^{i-2j}.$  Hence $$\hat{S}^i=\sum_{j=0}^{\lfloor\frac{i}{2}\rfloor} S^{i-2j}.$$
This yields that
$$\dim \mathcal{P}_k^n=\hat{S}^k+\hat{S}^{k-1}=\sum_{j=0}^{k}S^j=\dim P^k.$$ The theorem follows.
\end{proof}

%Recently, people began to study nonlinear equations on graphs, see e.g. Grigoryan/Lin/Yang. They obtained the solvability of the Kazdan-Warner and Yamabe type equations on graphs. In this paper, the authors initiate to study nonlinear Schr\"odinger equations on graphs. As is known to experts, the main difficulty for nonlinear analysis on graphs is the loss of chain rules for difference operators. So that many analytic results fail to hold on graphs. However, as observed by Grigoryan/Lin/Yang, variational methods still work. In this paper, the authors develop the Nehari methods on graphs to obtain the existence of solutions to the Schr\"odinger equations and study the limit behavior of Schr\"odinger equations as the parameter tends to the infinity. As is proved in the paper, when the parameter goes to infinity, ground state solutions of nonlinear Schr\"odinger equations converge to the ground state solution of a Yamabe type equation determined by the potential.

\bibliographystyle{alpha}
\bibliography{caloric-polynomial}

\end{document}